\definecolor{darkred}{rgb}{1,0,0} %can change the intensity in [0,1]
\definecolor{darkgreen}{rgb}{0,0.8,0}
\definecolor{darkblue}{rgb}{0,0,1}
\let\inf\relax \DeclareMathOperator*\inf{\vphantom{p}inf}
\numberwithin{equation}{section}
\theoremstyle{plain}
\theoremstyle{plain}
\newtheorem{theorem}{Theorem}
\numberwithin{theorem}{section}
\newtheorem{proposition}[theorem]{Proposition}
\theoremstyle{definition}
\theoremstyle{definition}
\newtheorem{remark}[theorem]{Remark}
\newtheorem*{theorem*}{Corollary of Conjecture 4.1}
\newcommand{\Real}{\mathrm{Re}} 
\newcommand{\genv}{\mathrm{gen.val}}
\DeclareMathOperator{\genva}{gen.val}
\title{A remark on two notions of order of contact}
\author{Martino Fassina}
\address{Department of Mathematics, University of Illinois, 1409 W Green
Street, Urbana, IL 61801, USA}
\email{fassina2@illinois.edu}
\begin{document}

\begin{abstract}

We recall two measurements of the order of contact of an ideal in the ring of germs of holomorphic functions at a point and we provide a class of examples in which they differ.

\end{abstract}
\subjclass[2010]{Primary 32F18, 32T25.
Secondary 32V35, 13H15.}
\keywords{Orders of contact, germs of complex analytic varieties, pseudoconvexity, real hypersurfaces.}
\maketitle

\section*{Introduction}

The purpose of this note is to provide examples where two measurements of the order of singularity of an ideal $I$ in the ring of germs of holomorphic functions at a point differ. 
These measurements arise when defining the order of contact of $q$-dimensional complex analytic varieties with a real hypersurface in $\mathbb{C}^n$ (\cite{Da82},\cite{Ca87}). 
The work in \cite{Da93} shows how to reduce such questions about the local geometry of a real hypersurface to questions about ideals in the ring of germs of holomorphic functions at a point. 
We therefore carry out our work in the holomorphic setting, and in the last part of the paper we indicate the consequences in the hypersurface case.

The measurements we compare are known in the literature as \textit{D'Angelo $q$-type} and \textit{Catlin $q$-type}; for an ideal $I$, we denote them respectively by ${\bf T}_q(I)$ and $D_q(I)$ 
(see Section \ref{Section 2} and Section \ref{Section 3} for precise definitions). In the important case $q=1$, we have ${\bf T}_1(I)=D_1(I)$. For $q>1$, Catlin expressed in \cite{Ca87} the hope that
 the two numbers are also equal. In \cite{DaKo99}, D'Angelo and Kohn noted that ${\bf T}_q$ and $D_q$ are simultaneously finite, and more recently, Brinzanescu and Nicoara obtained in \cite{BN15} some 
inequalities relating the two quantities. However, they were not able to produce an example where the two numbers differ. 

In this paper, we exhibit a class of ideals $I$ for which ${\bf T}_q(I)\neq D_q(I)$ (Theorem \ref{theo}). We also show that, in any dimension greater than $2$, the difference $D_q(I)-{\bf T}_q(I)$ can be arbitrarily large (Proposition \ref{distant}).

In Section \ref{Section 4} we derive similar statements in the case of type conditions for real hypersurfaces in $\mathbb{C}^n$.

The crucial point in understanding the difference between the two measurements turns out to be the distinction between an infimum and a generic value (Proposition \ref{ex1}).
In this respect, we point out an error in the work of Brinzanescu and Nicoara, where the generic value is assumed to be always equal to the infimum. The inequalities they obtained in \cite{BN15} hold 
if the quantity ${\bf T}_q$ is replaced by a new invariant $\beta_q$ (see Section \ref{Section 2}) defined in terms of the generic value. 

In the last part of the paper we describe inequalities between the measurements ${\bf T}_q$ and $D_q$. These inequalities are closely related to the inequalities from \cite{Da82} about how ${\bf T}_1$ behaves with respect to parameters.

\section{D'Angelo q-type}\label{Section 2}
Let $\mathcal{O}_n$ denote the local ring of germs of holomorphic functions at $0\in\mathbb{C}^n$. We write $(V,0)$ for the germ at $0$ of a complex variety in $\mathbb{C}^n$. In particular, $(\mathbb{C},0)$ denotes the germ at $0$ of $\mathbb{C}$. We denote by $\Gamma$ the set of non-constant germs of holomorphic functions $z\colon(\mathbb{C},0)\rightarrow(\mathbb{C}^n,0)$. Each such curve is therefore defined in an open neighborhood $U$ of $0$ (depending on $z$) in $\mathbb{C}$. We denote by $v(z)$ the order of vanishing of $z$ at $0$; that is, $v(z)$ is the unique integer $m$ for which $z(t)=t^mu(t)$ for the germ of a holomorphic map $u$ with $u(0)\neq 0$. In \cite{Da82}, D'Angelo introduced, for an ideal $I$ in $\mathcal{O}_n$, the invariant 
\begin{equation}\label{type1}
\textbf{T}_1^n(I)=\sup_{z\in\Gamma}\,\,\inf_{g\in I}\frac{v(g\circ z)}{v(z)},
\end{equation} 
called the {\em first order of contact of $I$} or the {\em 1-type of $I$}.
We drop the superscript $n$ on $\textbf{T}_1^n$ when there is no ambiguity. As will become clear later, sometimes it is useful to keep track of the dimension of the ambient space, especially when we want to exploit the next remark.

\begin{remark}\label{ubi}
For integers $m\leq n$, let $(z_1,\dots,z_m)$ be the ideal generated in $\mathcal{O}_n$ by the first $m$ coordinate functions. If $I\subset (z_1,\dots,z_m)$ is an ideal, then ${\bf T}_1^m(I)={\bf T}_1^n(I,z_{m+1},\dots,z_{n})$.
\end{remark}
D'Angelo also defined, for an ideal $I$ in $\mathcal{O}_n$ and $q\in\{1,\dots,n\}$, the {\em q-type of $I$} as
\begin{equation}\label{deltaq}
{\bf T}_q(I)= \inf_{\{w_1\dots,w_{q-1}\}} \,\,{\bf T}_1(I,w_1\dots,w_{q-1}).
\end{equation}
Here the infimum is taken over all choices of linear functions $\{w_1,\dots,w_{q-1}\}$, and $(I,w_1,\dots,w_{q-1})$ denotes the ideal generated by $I$ and $w_1,\dots,w_{q-1}$.

\begin{remark}\label{genericity}
Let $I$ be an ideal in $\mathcal{O}_n$ and let $G^{n-q+1}$ denote the Grassmannian of $(n-q+1)$-dimensional vector subspaces of $\mathbb{C}^n$. There exists $\beta\in \mathbb{R}\cup\{\infty\}$ and a non-empty open subset $W$ of $G^{n-q+1}$ such that if $S\in W$ and $S$ is defined by $w_j=0$ for $j=1,\dots,q-1$, then ${\bf T}_1(I,w_1,\dots,w_{q-1})=\beta$. If the dimension of  $(V(I),0)$ is greater than $q-1$, 
the assertion follows from taking $W=G^{n-q+1}$ and $\beta=\infty$. If $(V(I),0)$ is of dimension at most $q-1$, we can take $W$ to be the set of all hyperplanes transversal to $(V(I),0)$. 
We recall that every non-empty open subset of $G^{n-q+1}$ in the Zariski topology is also dense. 
\end{remark}

The number $\beta$ whose existence is noted in Remark \ref{genericity} will be called the \textit{generic value} of $\textbf{T}_1(I,w_1,\dots,w_{q-1})$. We can thus define, 
for an ideal $I$ in $\mathcal{O}_n$ and $q\in\{1,\dots,n\}$, the invariant 
$$\beta_q(I)=\underset{\{w_1,\dots,w_{q-1}\}}\genv{\bf T}_1(I,w_1,\dots,w_{q-1}).$$
As we show below, the generic value $\beta_q$ need not equal the $q$-type $\textbf{T}_q$. 
The result is inspired by \cite{Da80}, where D'Angelo showed that $\textbf{T}_1$ is not an upper semicontinuous function of parameters.
\begin{proposition}\label{ex1}
There exist ideals $I$ in $\mathcal{O}_3$ for which ${\bf T}_2(I)<\beta_2(I)$.
\end{proposition}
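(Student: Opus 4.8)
The plan is to exhibit, for a fixed integer $m\ge 3$, the ideal
$$I=(z_1^2,\ z_2^m+z_1z_3)\subset\mathcal O_3,$$
and to show that $\beta_2(I)=2m-2$ while ${\bf T}_2(I)=m$, which gives the required strict inequality. Here $V(I)=\{z_1=z_2=0\}$ is the $z_3$--axis, hence one--dimensional, so by Remark~\ref{genericity} the generic value $\beta_2(I)$ is finite; the whole argument then reduces to computing ${\bf T}_1(I,w)$ for every linear form $w$ and reading off both the generic value and the infimum.

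First I would pass to two variables. If $\{w=0\}$ contains the $z_3$--axis, i.e. $w\in(z_1,z_2)$, then $V(I,w)$ still contains the $z_3$--axis, so ${\bf T}_1(I,w)=\infty$; such $w$ form a proper Zariski--closed set and do not affect the infimum. For the remaining $w$, after rescaling I may take $w=z_3-az_1-bz_2$; substituting $z_3=az_1+bz_2$ into $z_2^m+z_1z_3$ and reducing modulo $z_1^2$ turns the computation, via Remark~\ref{ubi}, into that of ${\bf T}_1$ for the plane ideal
$$J_b=(z_1^2,\ z_2^m+bz_1z_2)\subset\mathcal O_2,$$
whose $1$--type depends only on $b$. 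For $b=0$ this is the standard ideal $(z_1^2,z_2^m)$, with $1$--type $m$; in particular ${\bf T}_1(I,z_3)=m$.

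The core computation is ${\bf T}_1(J_b)=2m-2$ for $b\neq 0$. For the lower bound I would evaluate along the resonant curve $s\mapsto(-b^{-1}s^{m-1}+s^{2m-3},\,s)$: writing $z_2^m+bz_1z_2=z_2(z_2^{m-1}+bz_1)$, both generators vanish along it to order exactly $2m-2$ while the curve has order $1$, giving contact $2m-2$. For the upper bound, given $\gamma\in\Gamma$ with $p=v(z_1\circ\gamma)$ and $q=v(z_2\circ\gamma)$, I would use the generator--wise identity
$$\inf_{g\in J_b} v(g\circ\gamma)=\min\bigl(2p,\ q+v((z_2^{m-1}+bz_1)\circ\gamma)\bigr)$$
and split into cases according to whether the leading terms of $z_2^{m-1}$ and $bz_1$ cancel, which forces $p=(m-1)q$. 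When they do not cancel the contact ratio is at most $m$; when they do, $v(z_1\circ\gamma)=(m-1)q$, so $\inf_{g} v(g\circ\gamma)\le 2p=(2m-2)q$ while $v(\gamma)=q$, again capping the ratio at $2m-2$. The hard part will be precisely this last family of tuned curves: one must verify that no amount of higher--order cancellation in $z_2^{m-1}+bz_1$ can push the contact past $2m-2$.

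Finally I would assemble everything. The hyperplanes with ${\bf T}_1(I,w)=2m-2$ are exactly those of the form $z_3=az_1+bz_2$ with $b\neq 0$, a Zariski--open dense subset of the Grassmannian of $2$--planes, so $\beta_2(I)=2m-2$ by Remark~\ref{genericity}. On the other hand the infimum defining ${\bf T}_2(I)$ is attained already at $w=z_3$, where ${\bf T}_1(I,z_3)=m$, and no linear form gives a smaller value; hence ${\bf T}_2(I)=m<2m-2=\beta_2(I)$, since $m\ge 3$. I would close by remarking that this is exactly the failure of upper semicontinuity of ${\bf T}_1$ along the family $\{J_b\}$ — the $1$--type drops from $2m-2$ to $m$ as $b\to 0$ — which is the phenomenon of \cite{Da80} underlying the construction.
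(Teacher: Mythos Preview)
Your proof is correct and follows essentially the same approach as the paper: after swapping $z_1\leftrightarrow z_2$ and replacing $z_3$ by $-z_3$, your $m=3$ case is precisely the paper's ideal $I=(z_1^3-z_3z_2,z_2^2)$, and both arguments reduce via Remark~\ref{ubi} to computing ${\bf T}_1$ of a one--parameter family of plane ideals whose $1$--type drops at a special parameter value. Your parameterization by $m\ge 3$ and the explicit upper--bound case split (which the paper omits, relying only on the inequality ${\bf T}_2(I)\le 3<4\le\beta_2(I)$) go a little beyond what the paper does here and in fact anticipate Theorem~\ref{theo}, of which your family is the special case $f(z_1)=z_1^m$, $g(z_2)=z_2^2$.
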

\begin{proof}
Let $I=(z_1^3-z_3z_2,z_2^2)\subset\mathcal{O}_3$. We show that ${\bf T}_2(I)=3$ and $\beta_2(I)=4$.
For $a,b,c\in\mathbb{C}$, consider the quantity $\textbf{T}_1(I,az_1+bz_2+cz_3)$. \\
Assume first that $c\neq 0$. Hence, without loss of generality, $c=1$. Since the type ${\bf T}_1$ is invariant under a local biholomorphic change of coordinates preserving $0$, 
we can apply the change of variables 
$w_1=z_1$, $w_2=z_2$, $w_3=z_3+az_1+bz_2$. We thus obtain 
\begin{equation*}
 \begin{split}                                                                                                                                                                                                                                                                                                               
{\bf T}_1(w_1^3-(w_3-aw_1-bw_2)w_2,w_2^2,w_3)&={\bf T}_1(w_1^3+aw_1w_2+bw_2^2,w_2^2,w_3)\\&={\bf T}_1(w_1^3+aw_1w_2,w_2^2,w_3).
\end{split}
\end{equation*}
Remark \ref{ubi} implies $\textbf{T}^3_1(w_1^3+aw_1w_2,w_2^2,w_3)=\textbf{T}^2_1(w_1^3+aw_1w_2,w_2^2)$. 
We now distinguish two cases. If $a\neq 0$, then $\textbf{T}^2_1(w_1^3+aw_1w_2,w_2^2)=4$, as one can see by 
considering the curve $\gamma_{a}(t)=(t,-t^2/a)$. If $a=0$, then $\textbf{T}^2_1(w_1^3+aw_1w_2,w_2^2)=3$. \\
When $c=0$, we have $\textbf{T}_1(I,az_1+bz_2+cz_3)=\infty$.\\ In conclusion,
\begin{equation*}
{\bf T}_2(I)=\underset{\{a,b,c\}}\inf\,\,\textbf{T}_1(I,az_1+bz_2+cz_3)=3.
\end{equation*}
\begin{equation*}
\beta_2(I)=\underset{\{a,b,c\}}\genva\,\,\textbf{T}_1(I,az_1+bz_2+cz_3)=4.
\end{equation*}
\end{proof}

Proposition \ref{ex1} provides a counterexample to the following statement, which appears as Corollary 2.11 in \cite{BN15}.
\medskip

\textit {If $I$ is a proper ideal in $\mathcal{O}_n$, the infimum in the definition of ${\bf T}_q(I)$ is achieved and equal to the generic value,
$${\bf T}_q(I)=\underset{\{w_1,\dots,w_{q-1}\}}\genv{\bf T}_1(I,w_1,\dots,w_{q-1}).$$ }

\section{Catlin q-type}\label{Section 3}
We recall from \cite{Ca87} the definition of Catlin $q$-type. (See also Section 3 of \cite{BN15}). 
Let $(V^q,0)$ be the germ of a complex $q$-dimensional variety in $\mathbb{C}^n$ and let $G^{n-q+1}$ denote the Grassmannian of $(n-q+1)$-dimensional vector subspaces of $\mathbb{C}^n$. 
By Proposition 3.1 in \cite{Ca87}, there exists a non-empty open subset $W$ of $G^{n-q+1}$ such that for all $S\in W$ the intersection $V^q\cap S$ at $0$ consists of finitely many irreducible $1$-dimensional components, whose germs at $0$ are denoted by $\gamma_S^k\colon (\mathbb{C},0)\rightarrow (\mathbb{C}^n,0)$, for $k=1,\dots,P$. For every $g\in\mathcal{O}_n$ consider the quantity
\begin{equation}\label{quant}
 \max_{k=1,\dots,P}\frac{v(g\circ \gamma_{S}^k)}{v(\gamma_S^k)}.
 \end{equation}
In Proposition 3.1 of \cite{Ca87}, Catlin showed that \eqref{quant} gives the same value for all $S$ in a non-empty open subset of $W$ (depending on the variety $V$). Therefore, for a given variety $V$, the value \eqref{quant} is generic over the choice of $S$. 
We can thus define, for an ideal $I$ in $\mathcal{O}_n$ and a positive integer $q$, the \textit{Catlin $q$-type of $I$} to be the number
$$D_q(I)=\sup_{V^{q}}\,\,\inf_{g\in I}\,\,\underset{S\in G^{n-q+1}}\genva\,\, \max_{k=1,\dots,P}\frac{v(g\circ \gamma_{S}^k)}{v(\gamma_S^k)}.  $$

We next give an example of an ideal $I$ in $\mathcal{O}_3$ for which $\textbf{T}_2(I)\neq D_2(I)$. The author knows of no such example in the literature. 
\begin{proposition}\label{ex}
 There exist ideals $I$ in $\mathcal{O}_3$ for which $\textbf{T}_2(I)<D_2(I)$.
\end{proposition}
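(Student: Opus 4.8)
The natural strategy is to reuse the ideal $I=(z_1^3-z_3z_2,z_2^2)\subset\mathcal{O}_3$ from Proposition \ref{ex1}, since the discrepancy between infimum and generic value exhibited there is precisely the phenomenon that separates the D'Angelo and Catlin $q$-types: $\mathbf{T}_2$ uses an infimum over the linear slices, whereas $D_2$ builds in a generic value over the Grassmannian $G^{n-q+1}$. We already know $\mathbf{T}_2(I)=3$, so it suffices to show $D_2(I)\geq 4$ (in fact one expects $D_2(I)=4$). First I would unwind the definition of $D_2(I)$: it is a supremum over germs of $2$-dimensional varieties $V^2\subset\mathbb{C}^3$ of $\inf_{g\in I}\,\genv_{S}\,\max_k v(g\circ\gamma_S^k)/v(\gamma_S^k)$, where $\gamma_S^k$ are the $1$-dimensional branches of $V^2\cap S$ for generic $S\in G^2$.

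The key step is to pick the right test variety $V^2$. A good candidate is $V^2=\{z_2=0\}$, the hyperplane on which the ``bad'' behavior of $I$ is concentrated; note $V(I)$ itself is $1$-dimensional (it lies in $\{z_2=0\}$ and inside that hyperplane is $\{z_1^3=0\}$), so we are free to choose $V^2$ transversal to the relevant directions. For $V^2=\{z_2=0\}$ and a generic $2$-plane $S$, the intersection $V^2\cap S$ is a single line $\ell$ through the origin in $\{z_2=0\}$, parametrized by some $\gamma(t)=(at,0,bt)$ with $(a,b)\neq(0,0)$, generically with $a\neq 0$. Restricting the generators: $z_2^2\circ\gamma\equiv 0$, and $(z_1^3-z_3z_2)\circ\gamma(t)=a^3t^3$, which has order $3$ (generically $a\neq 0$); so for the generators $v(g\circ\gamma)/v(\gamma)=3$. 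This only gives $3$, matching $\mathbf{T}_2$, so $V^2=\{z_2=0\}$ is the wrong choice — I would instead choose a $V^2$ that is smooth, $2$-dimensional, and tangent enough to force order $4$ along its generic $1$-dimensional slices, for instance a variety patterned on the curve $\gamma_a(t)=(t,-t^2/a)$ that realized $\mathbf{T}_1^2=4$ in the proof of Proposition \ref{ex1}; concretely something like $V^2=\{z_2 = -z_1^2/z_3\}$, i.e. $\{z_1^2+z_2z_3=0\}$, or a suitable graph $\{z_2=\phi(z_1,z_3)\}$ whose generic line-slices have the form $(t, ct^2+\cdots, \ast)$.

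Then the main computation is: for this $V^2$ and generic $S\in G^2$, identify the branches $\gamma_S^k$, and show that for \emph{every} $g\in I$ (not just the generators) one has $\max_k v(g\circ\gamma_S^k)/v(\gamma_S^k)\geq 4$, hence $\inf_{g\in I}$ of the generic value is $\geq 4$, hence $D_2(I)\geq 4>3=\mathbf{T}_2(I)$. The inequality $D_q\leq \mathbf{T}_q$ is false in general, so I do not need an upper bound; but to get the clean statement $D_2(I)=4$ one would also argue $D_2(I)\leq 4$, e.g. by bounding $\inf_{g\in I}$ using the single generator $z_1^3-z_3z_2$ along the slices of an arbitrary $V^2$.

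\textbf{Main obstacle.} The hard part is controlling $\inf_{g\in I}$ over \emph{all} elements of $I$ along the generic branch: an arbitrary $g=(z_1^3-z_3z_2)h_1+z_2^2h_2$ restricted to a curve of the form $(t,ct^2+\cdots,\ast)$ must be shown to vanish to order $\geq 4$, and one must check this does not drop for some clever $h_1,h_2$ — in particular verifying that the order-$3$ term $z_1^3$ genuinely cancels against $z_3z_2$ along the chosen slices and cannot be resurrected. This is the same delicate point ($z_1^3\equiv z_3 z_2$ modulo the ideal, combined with $z_2^2\equiv 0$) that made $\mathbf{T}_1^2(w_1^3+aw_1w_2,w_2^2)=4$ when $a\neq 0$ in Proposition \ref{ex1}, so I expect it to go through by the same order-of-vanishing bookkeeping, but the presence of the extra variable $z_3$ and the need to track genericity of $S$ simultaneously is where care is required.
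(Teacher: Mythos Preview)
Your overall strategy---use the ideal $I=(z_1^3-z_3z_2,z_2^2)$, invoke $\mathbf{T}_2(I)=3$ from Proposition~\ref{ex1}, and exhibit a single $2$-dimensional variety $V^2$ whose generic $2$-plane slices detect order $\geq 4$ for every $g\in I$---is exactly the paper's strategy. The gap is in the choice of $V^2$.

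Your candidate $V^2=\{z_1^2+z_2z_3=0\}$ does not work. Intersecting with the generic plane $z_3+az_1+bz_2=0$ yields the conic $z_1^2-az_1z_2-bz_2^2=0$ in the $(z_1,z_2)$-plane, which is a union of two \emph{lines} $z_1=\lambda z_2$ through $0$; along $\gamma(t)=(\lambda t,t,\ast)$ one has $v(z_2^2\circ\gamma)=2$ and (using $\lambda^2=a\lambda+b$) $v((z_1^3-z_3z_2)\circ\gamma)=2$, so $\inf_{g\in I}$ of the generic value is at most $2$. A graph $\{z_2=\phi(z_1,z_3)\}$ with holomorphic $\phi$ of order $2$ fares no better: e.g.\ for $\phi=-z_1^2$ the generic slice is $\gamma(t)=(t,-t^2,-at+bt^2)$, and $(z_1^3-z_3z_2)\circ\gamma=(1-a)t^3+bt^4$ has order $3$ for generic $a$, so $\inf_{g\in I}\leq 3$.

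The idea you are missing, and which dissolves your ``main obstacle'' in one stroke, is to take $V^2$ to be the zero locus of the first generator itself:
\[
V^2=\{z_1^3-z_3z_2=0\}.
\]
Then \emph{every} $g\in I$ restricts to $k\cdot z_2^2$ on $V^2$ (the $(z_1^3-z_3z_2)h$ part vanishes identically), so for any curve $\gamma\subset V^2$ one has $v(g\circ\gamma)\geq v(z_2^2\circ\gamma)$, and the infimum over $g\in I$ is automatic. It remains only to compute $v(z_2\circ\gamma_{a,b})$ for the branches $\gamma_{a,b}$ of $V^2\cap\{z_3+az_1+bz_2=0\}$: substituting and solving the quadratic $bz_2^2+az_1z_2+z_1^3=0$ gives a branch $\gamma_{a,b}(t)=(t,-t^2/a+\cdots,-at+\cdots)$ with $v(z_2\circ\gamma_{a,b})=2$, hence $v(z_2^2\circ\gamma_{a,b})=4$ and $D_2(I)\geq 4$. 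This is precisely the paper's argument.
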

\begin{proof}
Let $I=(z_1^3-z_3z_2,z_2^2)\subset \mathcal{O}_3$. We show that the strict inequality $\textbf{T}_2(I)<D_2(I)$ holds. We have already noted in the proof of Proposition \ref{ex1} that
$\textbf{T}_2(I)=3$. We now argue that $D_2(I)\geq 4$. Consider the $2$-dimensional variety $V$ in $\mathbb{C}^3$ defined by the equation $z_1^3-z_3z_2=0$. 
For each $2$-dimensional plane $S_{a,b}$ in $\mathbb{C}^3$ through 0 defined by $z_3+az_1+bz_2=0$ with $a\neq 0, b\neq 0$, we prove that there exists an irreducible 1-dimensional 
component of $V\cap S_{a,b}$, with (minimal) parameterization $\gamma_{a,b}$, such that 
\begin{equation}\label{incatllemma}
\forall g\in I \quad \frac{v(g\circ\gamma_{a,b})}{v(\gamma_{a,b})}\geq 4.
\end{equation}
Replacing $z_3=-az_1-bz_2$ into $z_1^3-z_3z_2=0$ and solving the second equation for $z_2$, we see that the $1$-dimensional irreducible components of $V\cap S_{a,b}$ are defined 
by the equations
$$z_2=\frac{-az_1}{2b}\pm\frac{az_1}{2b}\left(1-\frac{4bz_1}{a^2}\right)^{\frac{1}{2}},\quad\quad z_3=-az_1-bz_2.$$
It suffices to consider the plus sign. The resulting curve has a local parameterization at 0 given by
$$\gamma_{a,b}(t)=\left(t, -\frac{1}{a}t^2+\dots, -at +\frac{b}{a}t^2+\dots\right),$$
where dots denote higher order terms in $t$. \\For $g\in I$, write $g=h\cdot(z_1^3-z_3z_2)+k\cdot(z_2^2)$ with $h,k\in\mathcal{O}_3$. Since $\gamma^*_{a,b}(z_1^3-z_3z_2)\equiv 0$ and $v(\gamma_{a,b})=1$, 
it follows, for all $g\in I$, that 
$$\frac{v(g\circ\gamma_{a,b})}{v(\gamma_{a,b})}\geq v(z_2^2\circ\gamma_{a,b})=4.$$
Therefore \eqref{incatllemma} holds.
Thus ${\bf T}_2(I)=3<4\leq D_2(I).$
\end{proof}

Already when $q=2, n=3$, we can find examples of ideals for which the difference between the two quantities is arbitrarily large.

\begin{proposition}\label{2,3}
Let $k\in\mathbb{N}$. There exist ideals $I$ in $\mathcal{O}_3$ for which $$D_2(I)-{\bf T}_2(I)>k.$$
\end{proposition}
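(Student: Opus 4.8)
The plan is to generalize the ideal used in Propositions~\ref{ex1} and~\ref{ex}. Fix an integer $N\geq 3$ and set $I=(z_1^3-z_3z_2,\,z_2^N)\subset\mathcal{O}_3$. I will show that
$$\mathbf{T}_2(I)\leq N\qquad\text{and}\qquad D_2(I)\geq 2N,$$
so that $D_2(I)-\mathbf{T}_2(I)\geq N$; taking $N=\max(3,k+1)$ then produces an ideal with $D_2-\mathbf{T}_2>k$, as required. (The same scheme works if one instead raises the exponent of $z_1$, e.g. with $(z_1^m-z_3z_2,z_2^2)$.)

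For the upper bound on $\mathbf{T}_2$, I would estimate $\mathbf{T}_2(I)\leq\mathbf{T}_1(I,z_3)$ by choosing the linear form $w_1=z_3$. Since $z_1^3-z_3z_2\equiv z_1^3\pmod{z_3}$, we have $(I,z_3)=(z_1^3,z_2^N,z_3)$, and Remark~\ref{ubi} reduces this to a two-variable monomial ideal: $\mathbf{T}_1^3(z_1^3,z_2^N,z_3)=\mathbf{T}_1^2(z_1^3,z_2^N)$. The latter is an elementary computation: for a curve $z\in\Gamma$ in $\mathbb{C}^2$ whose two components vanish to orders $p$ and $q$, one has $v(z)=\min(p,q)$ and, using the two generators, $\inf_{g\in(z_1^3,z_2^N)}v(g\circ z)\leq\min(3p,Nq)$; since $N\geq 3$ the ratio $\min(3p,Nq)/\min(p,q)$ is at most $3$ when $p\leq q$ and at most $N$ when $q\leq p$, hence always $\leq N$. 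Thus $\mathbf{T}_1^2(z_1^3,z_2^N)\leq N$ and $\mathbf{T}_2(I)\leq N$.

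For the lower bound on $D_2$, I would run the argument in the proof of Proposition~\ref{ex} verbatim, with $z_2^N$ in place of $z_2^2$. Take the $2$-dimensional variety $V=\{z_1^3-z_3z_2=0\}$. For each plane $S_{a,b}=\{z_3+az_1+bz_2=0\}$ with $a\neq 0,\ b\neq 0$, the variety $V\cap S_{a,b}$ has a $1$-dimensional irreducible component with minimal parameterization $\gamma_{a,b}(t)=(t,-t^2/a+\cdots,-at+\cdots)$, so $v(\gamma_{a,b})=1$ and $v(z_2\circ\gamma_{a,b})=2$. Writing any $g\in I$ as $g=h\,(z_1^3-z_3z_2)+k\,z_2^N$ and using $\gamma_{a,b}^*(z_1^3-z_3z_2)\equiv 0$ together with $v(z_2^N\circ\gamma_{a,b})=2N$, one gets $v(g\circ\gamma_{a,b})/v(\gamma_{a,b})\geq 2N$ for every $g\in I$; taking the maximum over the components of $V\cap S_{a,b}$ only increases this. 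This bound holds on the Zariski-open dense set $\{a\neq 0,\ b\neq 0\}$ of the Grassmannian, and since (by Proposition~3.1 of \cite{Ca87}) the quantity $\max_k v(g\circ\gamma_S^k)/v(\gamma_S^k)$ equals its generic value on a Zariski-open dense subset as well, the two dense subsets meet, so the generic value is $\geq 2N$ for every $g\in I$. Hence $\inf_{g\in I}(\cdots)\geq 2N$ and $D_2(I)\geq 2N$.

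I do not expect a serious obstacle: the statement is a quantitative amplification of Propositions~\ref{ex1} and~\ref{ex}. The steps needing care are the correct application of Remark~\ref{ubi} to pass to the monomial computation, and — the more delicate point — checking that the lower bound $2N$, which is established only on a Zariski-dense family of planes, survives the passage to the generic value in the definition of $D_2$; this is guaranteed because two nonempty Zariski-open subsets of the irreducible Grassmannian always intersect.
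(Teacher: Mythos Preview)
Your proof is correct and follows essentially the same approach as the paper's: bound $\mathbf{T}_2$ above via the linear form $z_3$, and bound $D_2$ below using the variety $\{z_1^3-z_3z_2=0\}$ and the curve $\gamma_{a,b}$. The paper's only difference is the parameter choice---it takes $I=(z_1^m-z_3z_2,\,z_2^m)$, varying both exponents together to obtain $D_2-\mathbf{T}_2\geq m^2-2m$, rather than fixing the first exponent at $3$ and varying only the second as you do.
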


\begin{proof}
Given $k$, choose $m\in\mathbb{N}$ such that $m^2-2m>k$ and consider the ideal $I=(z_1^m-z_3z_2, z_2^m)$. 
We apply the same argument as in Proposition \ref{ex}. It is immediately verified that ${\bf T}_2(I)\le {\bf T}_1(I,z_3)=m$. We now prove that $D_2(I)\geq m(m-1)$, from which the conclusion will follow. 
Consider the $2$-dimensional variety $V$ in $\mathbb{C}^3$ defined by the equation $z_1^m-z_3z_2=0$. The same computations as in Proposition \ref{ex} show, for every $2$-dimensional plane $S_{a,b}$ in $\mathbb{C}^3$ through 0 defined by an equation $z_3+az_1+bz_2=0$ with $a\neq 0, b\neq 0$, that there exists an irreducible 1-dimensional component of $V\cap S_{a,b}$ whose local parameterization at 0 is given by
$$\gamma_{a,b}(t)=\left(t, -\frac{1}{a}t^{m-1}+\dots, -at +\frac{b}{a}t^{m-1}+\dots\right).$$
Once again, dots denote higher order terms in $t$.  \\For $g\in I$, write $g=h\cdot(z_1^m-z_3z_2)+k\cdot(z_2^m)$ with $h,k\in\mathcal{O}_3$. Since $\gamma^*_{a,b}(z_1^m-z_3z_2)\equiv 0$ and $v(\gamma_{a,b})=1$, 
it follows, for all $g\in I$, that \begin{equation*}
 \frac{v(g\circ\gamma_{a,b})}{v(\gamma_{a,b})}\geq v(z_2^m\circ\gamma_{a,b})=m(m-1).
\end{equation*} 
Hence $D_2(I)\geq m(m-1)=m^2-2m+m>k+{\bf T}_2(I).$
\end{proof}

In the spirit of Remark \ref{ubi}, we can adjoin new variables to generalize Proposition \ref{2,3} to arbitrary dimension. We give the general statement, omitting the simple proof.

\begin{proposition}\label{distant}
Let $q,k,n\in\mathbb{N}$, $q\geq 2$, $n\geq q+1$. There exist ideals $I$ in $\mathcal{O}_n$ for which $D_q(I)-{\bf T}_q(I)>k$.
\end{proposition}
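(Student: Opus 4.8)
The plan is to reduce Proposition \ref{distant} to the already-established case $q=2,n=3$ (Proposition \ref{2,3}) by an explicit padding construction in the spirit of Remark \ref{ubi}. Concretely, given $q,k,n$ with $q\ge 2$ and $n\ge q+1$, I would first apply Proposition \ref{2,3} to obtain an ideal $J\subset\mathcal{O}_3$, generated by two elements in the variables $(z_1,z_2,z_3)$, with $D_2(J)-{\bf T}_2(J)>k$. I would then set
\begin{equation*}
I=(J,\,z_3',\dots,z'_{q-2},\,w_1,\dots,w_{n-q-1})\subset\mathcal{O}_n,
\end{equation*}
where I relabel the coordinates of $\mathbb{C}^n$ so that $(z_1,z_2,z_3)$ are the coordinates appearing in $J$, the $z'_i$ are $q-2$ further coordinate functions, and the $w_j$ are the remaining $n-q-1$ coordinate functions. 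The point of adjoining exactly $q-2$ coordinates of one "flavor" and $n-q-1$ of another is bookkeeping: cutting by the $q-2$ linear forms $z'_i$ inside the $q$-type reduces a $q$-type computation to a $2$-type computation, while the $w_j$ play the role of the padding variables $z_{m+1},\dots,z_n$ from Remark \ref{ubi}, so that $V(I)$ genuinely has dimension $q$ and the ideal is proper.

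The two things to check are that ${\bf T}_q(I)={\bf T}_2(J)$ and $D_q(I)=D_2(J)$. For the first, I would argue that in the infimum defining ${\bf T}_q(I)$ one may as well take $q-1$ of the linear forms to be $z'_1,\dots,z'_{q-2}$ together with one more linear form; adjoining the $z'_i$ to $I$ already kills those generators, and then by Remark \ref{ubi} the variables $w_j$ can be dropped, leaving exactly ${\bf T}_2(J)$. That the infimum is not made smaller by other choices of the $q-1$ linear forms follows because $I$ already contains all the $z'_i$ and $w_j$, so adding generic linear combinations cannot decrease the $1$-type below what the $J$-part forces — this is the one place a short argument (rather than pure bookkeeping) is needed, and it is essentially the observation used implicitly in Remark \ref{ubi}. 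For the second, I would similarly note that the varieties $V^q\subset\mathbb{C}^n$ contained in the affine subspace $\{z'_1=\dots=z'_{q-2}=0\}$ (on which the $w_j$ are free) realize, after the analogue of Catlin's slicing, exactly the $1$-dimensional curves used in computing $D_2(J)$, so $D_q(I)\ge D_2(J)$; the reverse inequality again uses that every generator $z'_i$ and $w_j$ of $I$ vanishes to infinite order on no curve, hence the infimum over $g\in I$ in the definition of $D_q$ is controlled by the $J$-generators just as in the proof of Proposition \ref{ex}.

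Putting these together gives $D_q(I)-{\bf T}_q(I)=D_2(J)-{\bf T}_2(J)>k$, which is the claim. The main obstacle — and the reason the paper calls the proof "simple" rather than trivial — is being careful that the reductions ${\bf T}_q(I)={\bf T}_2(J)$ and $D_q(I)=D_2(J)$ are genuine equalities, i.e. that neither adjoining the extra coordinate functions nor the freedom in choosing the cutting planes in the definitions of ${\bf T}_q$ and $D_q$ can change the value; all of this is a matter of carefully tracking which linear forms and which slicing planes are "generic enough," using Remark \ref{ubi} and Remark \ref{genericity}. There is no new geometric input beyond Proposition \ref{2,3}; the work is entirely in the dimension-counting.
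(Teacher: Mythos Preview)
Your approach is the same as the paper's: the paper explicitly says the proof proceeds ``in the spirit of Remark \ref{ubi}'' by adjoining new variables to the ideal from Proposition \ref{2,3}, and then omits the details. So the strategy is correct and matches.

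Two small points. First, you only need the inequalities ${\bf T}_q(I)\le{\bf T}_2(J)$ and $D_q(I)\ge D_2(J)$, not equalities; this spares you the ``one place a short argument is needed'' that you flag, since the upper bound on ${\bf T}_q$ comes simply from exhibiting one choice of $q-1$ linear forms. Second, your description of the test variety $V^q$ has the bookkeeping backward: if $V^q$ sits in $\{z'_1=\dots=z'_{q-2}=0\}$ with the $w_j$ free, its dimension is $2+(n-q-1)=n-q+1$, not $q$. You want $V^q=V^2\times\mathbb{C}^{q-2}_{z'}\times\{0\}_w$, i.e.\ the $z'_i$ free and the $w_j$ zero, so that $\dim V^q=2+(q-2)=q$. (In fact the two ``flavors'' of padding variables play identical roles in $I$; the distinction only matters when you build $V^q$, where you need exactly $q-2$ free directions.) With that correction the $D_q$ lower bound goes through exactly as in the proof of Proposition \ref{2,3}: a generic $(n-q+1)$-plane $S$ cuts $V^q$ in curves whose projection to $\mathbb{C}^3$ are precisely the $\gamma_{a,b}$ already analyzed there.
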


The next theorem generalizes the work above, providing a wider class of examples where the two types differ. Once again, the distinction between the infimum and the generic value plays a major role.
\begin{theorem}\label{theo}
Let $I=\big{(}f(z_1)-z_2z_3,g(z_2)\big{)}$ be an ideal in $\mathcal{O}_3$ where $f$ and $g$ are holomorphic functions with $v(f)\geq 3$ and $v(g)\geq 2$. Then $$D_2(I)\geq \beta_2(I)>{\bf T}_2(I).$$
\end{theorem}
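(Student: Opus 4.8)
The theorem asserts two inequalities: $D_2(I)\geq\beta_2(I)$ and $\beta_2(I)>{\bf T}_2(I)$. The plan is to treat these separately, reusing the computational machinery already developed for Propositions \ref{ex1} and \ref{ex}.

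First I would compute ${\bf T}_2(I)$ and $\beta_2(I)$ explicitly, following the proof of Proposition \ref{ex1}. For a linear functional $az_1+bz_2+cz_3$ with $c\neq 0$, normalize $c=1$ and apply the biholomorphic change of coordinates $w_1=z_1$, $w_2=z_2$, $w_3=z_3+az_1+bz_2$; this turns the ideal into $\big(f(w_1)+aw_1w_2+bw_2^2,\,g(w_2),\,w_3\big)$, and since $g(w_2)$ generates a term divisible by $w_2^{v(g)}$ and $v(g)\geq 2$, the $bw_2^2$ contribution (and higher) is absorbed, leaving $\big(f(w_1)+aw_1w_2,\,g(w_2),\,w_3\big)$. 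By Remark \ref{ubi} this has the same $1$-type as the ideal $\big(f(w_1)+aw_1w_2,\,g(w_2)\big)$ in $\mathcal{O}_2$. Here I expect that for generic $a$ (i.e. $a\neq 0$) the $1$-type exceeds its value at $a=0$: testing the curve $\gamma_a(t)=\big(t,\,\phi(t)\big)$ where $\phi$ is chosen so that $f(t)+at\phi(t)$ vanishes to high order (solving $f(t)/t + a\phi(t)=0$, possible since $v(f)\geq 3$ so $f(t)/t$ is a germ vanishing to order $\geq 2$), one gets a larger value of $v(g\circ\gamma_a)/v(\gamma_a)$ than the $a=0$ curve permits. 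Taking the infimum over $a$ recovers the $a=0$ value, which is ${\bf T}_2(I)$, while the generic value over $a\neq 0$ is $\beta_2(I)$, and these differ — giving $\beta_2(I)>{\bf T}_2(I)$. I should also check the case $c=0$ gives $1$-type $\infty$, hence does not affect the infimum but is excluded from the generic (Zariski-open) locus since that locus consists of hyperplanes transversal to $V(I)$.

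For $D_2(I)\geq\beta_2(I)$, I would adapt the argument of Proposition \ref{ex}. Take the $2$-dimensional variety $V=\{f(z_1)-z_2z_3=0\}$. For each plane $S_{a,b}=\{z_3+az_1+bz_2=0\}$ with $a\neq 0$, substitute $z_3=-az_1-bz_2$ and solve $f(z_1)+az_1z_2+bz_2^2=0$ for $z_2$ as a function of $z_1$ (two branches via the quadratic formula / Puiseux expansion); one branch yields a $1$-dimensional component with minimal parameterization $\gamma_{a,b}(t)=\big(t,\,\psi_{a,b}(t),\,-at-b\psi_{a,b}(t)\big)$ where $\psi_{a,b}(t)=-f(t)/(at)+\cdots$ vanishes to order $v(f)-1\geq 2$. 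For $g\in I$, writing $g=h\cdot(f(z_1)-z_2z_3)+k\cdot g(z_2)$, the first term pulls back to zero along $\gamma_{a,b}$ and $v(\gamma_{a,b})=1$, so $v(g\circ\gamma_{a,b})/v(\gamma_{a,b})\geq v\big(g(z_2)\circ\gamma_{a,b}\big)=v(g)\cdot(v(f)-1)$ — a fixed number over the whole Zariski-open set of such planes, hence equal to the generic value of the max over branches. Comparing: $\inf_g$ of that generic value over the chosen $V$ is at least $v(g)(v(f)-1)$, so $D_2(I)\geq v(g)(v(f)-1)$. I then need the separate observation that $\beta_2(I)$ (computed in step one as the generic $1$-type of $\big(f(w_1)+aw_1w_2,\,g(w_2)\big)$ in $\mathcal{O}_2$) is $\leq v(g)(v(f)-1)$; this follows because the curve $\gamma_a$ from step one achieving the generic value has $v(g\circ\gamma_a)/v(\gamma_a)$ bounded by exactly this quantity, as $g(\psi(t))$ vanishes to order $v(g)\cdot v(\psi)=v(g)(v(f)-1)$. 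Chaining the inequalities gives $D_2(I)\geq\beta_2(I)$.

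The main obstacle is the bookkeeping in step one: showing cleanly that the generic $1$-type of $\big(f(w_1)+aw_1w_2,\,g(w_2)\big)$ strictly exceeds ${\bf T}_2(I)$, and identifying both values precisely. One must verify that the optimal curve for $a\neq 0$ really is $\gamma_a(t)=(t,\psi_a(t))$ with $\psi_a$ solving $f(t)+at\psi_a(t)=0$ (so that the first generator vanishes identically and only $g(\psi_a)$ contributes, giving order $v(g)(v(f)-1)$), whereas at $a=0$ no curve can make $f(w_1)+0$ vanish faster than order $v(f)$ unless it enters the $w_1=0$ axis, on which $g(w_2)$ forces a finite bound strictly below $v(g)(v(f)-1)$ when $v(f)\geq 3$. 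A minor subtlety is ensuring $f(t)/t$ has no unit factor obstructing the solution for $\psi_a$, which holds since $v(f)\geq 3$ guarantees $f(t)/(t\cdot a)$ is a germ vanishing to order $\geq 2$, so $\psi_a$ exists by the implicit function theorem applied after dividing out.
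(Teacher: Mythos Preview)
Your approach is essentially the paper's, but there is a genuine error in step one. You claim that because $v(g)\geq 2$, the term $bw_2^2$ in $f(w_1)+aw_1w_2+bw_2^2$ is ``absorbed'' by $g(w_2)$, reducing the ideal to $(f(w_1)+aw_1w_2,\,g(w_2))$. This is only correct when $v(g)=2$: if $v(g)\geq 3$ then $w_2^2\notin(g(w_2))$, and one checks directly (e.g.\ with $f=z_1^3$, $g=z_2^3$, $a=1$) that the two ideals differ. This matters for your lower bound on $\beta_2(I)$: along your curve $\gamma_a(t)=(t,\phi(t))$ with $\phi(t)=-f(t)/(at)$, the first generator $f(w_1)+aw_1w_2+bw_2^2$ pulls back to $b\phi(t)^2$, of order $2(v(f)-1)$, not to zero; so when $v(g)>2$ the infimum over the ideal along $\gamma_a$ is $2(v(f)-1)$ rather than $v(g)(v(f)-1)$, and the curve fails to certify the value you want.

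The paper avoids this by keeping $b$ in the picture throughout. For generic $a,b\neq 0$ it treats $f(z_1)+az_1z_2+bz_2^2=0$ as a quadratic in $z_2$; one branch of its zero locus has parameterization $\gamma_{a,b}(t)=(t,-f(t)/(at)+\cdots)$ --- precisely the curve you already use in your $D_2$ step. Along that curve the first generator vanishes identically, so only $g\circ\gamma_{a,b}$ contributes, giving order $v(g)(v(f)-1)$. The paper also invokes the fact that it suffices to test curves lying in the zero locus of one of the two generators, which yields the exact value $\beta_2(I)=\max\{v(f),\,v(g)(v(f)-1)\}=v(g)(v(f)-1)$; your assertion that $\gamma_a$ ``achieves the generic value'' needs a justification of this sort to give the upper bound on $\beta_2(I)$ required for the chain $D_2(I)\geq v(g)(v(f)-1)\geq\beta_2(I)$. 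Finally, the paper does not compute ${\bf T}_2(I)$ exactly: the single choice $w=z_3$ already gives ${\bf T}_2(I)\leq\max\{v(f),v(g)\}$, and the hypotheses $v(f)\geq 3$, $v(g)\geq 2$ then force $v(g)(v(f)-1)>\max\{v(f),v(g)\}$, which is all that is needed.
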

\begin{proof}
First note that 
\begin{equation}\label{pio}
\begin{split}
{\bf T}_2(I)\leq {\bf T}^3_1(f(z_1)-z_2z_3,g(z_2),z_3)&={\bf T}^2_1(f(z_1), g(z_2))\\&=\max\,\{v(f),v(g)\}.
\end{split}
\end{equation}
We next compute the quantity  
$$\beta_2(I)=\underset{\{a,b\}}\genva\,\,{\bf T}_1\big{(}f(z_1)+az_1z_2+bz_2^2,g(z_2)\big{)}.$$
We can assume without loss of generality that $a\neq 0$ and $b\neq 0$. It suffices to test the germs of the curves defined by $g(z_2)=0$ or $f(z_1)+az_1z_2+bz_2^2=0$. The equation $g(z_2)=0$ defines at $0$ the germ of the curve $\sigma(t)=(t,0)$, and $$v(\sigma^*(f(z_1)+az_1z_2+bz_2^2))=v(f).$$ The function $f(z_1)+az_1z_2+bz_2^2$ is a quadratic in $z_2$. One of the components of its zeroset has (minimal) local parameterization at $0$ given by
\begin{equation}\label{gamma}
\gamma_{a,b}(t)=\left(t,-\frac{f(t)}{at}+\dots\right),
\end{equation}
where dots denote higher order terms in $t$.
Note that $$v(g\circ \gamma_{a,b})=v(g)(v(f)-1).$$ We therefore conclude that $$\beta_2(I)=\max\,\{v(f), v(g)(v(f)-1)\}.$$ By the hypothesis on $v(f)$ and $v(g)$, we have $\beta_2(I)=v(g)(v(f)-1)$. Combining with inequality \eqref{pio}, we conclude that $\beta_2(I)>{\bf T}_2(I)$. Now consider the $2$-dimensional variety $V$ defined in $\mathbb{C}^3$ by the equation $f(z_1)-z_2z_3=0$. The computation above shows, for every $2$-dimensional plane $S_{a,b}$ in $\mathbb{C}^3$ through 0 defined by an equation $z_3+az_1+bz_2=0$ with $a\neq 0, b\neq 0$, that there exists an irreducible 1-dimensional component of $V\cap S_{a,b}$ whose local parameterization $\gamma_{a,b}$ at 0 is given by \eqref{gamma}.
For all $h\in I$ we have 
$$\frac{v(h\circ\gamma_{a,b})}{v(\gamma_{a,b})}\geq v(g\circ\gamma_{a,b})=v(g)(v(f)-1).$$ 
It follows that $D_2(I)\geq v(g)(v(f)-1)=\beta_2(I)$. 
\end{proof}

\section{The type of a real hypersurface}\label{Section 4}
Let $C^{\infty}_p$ denote the ring of germs of smooth functions at a point $p\in\mathbb{C}^n$. Let $(M,p)$ be the germ of a smooth real hypersurface in $\mathbb{C}^n$ and let $r_p$ denote a generator of the principal ideal in $C^{\infty}_p$ of functions that vanish on $M$. We write $\Gamma$ for the set of non-constant germs of holomorphic functions $z\colon(\mathbb{C},0)\rightarrow(\mathbb{C}^n,p)$. In \cite{Da82}, D'Angelo defined the maximum order of contact of complex analytic curves with $M$ at $p$ as the number $$\Delta_1(M,p)=\sup_{z\in\Gamma_p}\frac{v(r_p\circ z)}{v(z)}.$$
More generally, when $q\in\{1,\dots,n\}$, the maximum order of contact of $q$-dimensional complex analytic varieties with $M$ at $p$ is defined as 
\begin{equation*}%\label{deltaqhyp}
\Delta_{q}(M,p)=\inf_{S\in G^{n-q+1}_p}\Delta_1(M\cap S,p),
\end{equation*} 
where $G^{n-q+1}_p$ denotes the set of $(n-q+1)$-dimensional complex affine subspaces of $\mathbb{C}^n$ through $p$, and $(M\cap S,p)$ is regarded as the germ of a smooth real hypersurface in $\mathbb{C}^{n-q+1}$.
The invariant $\Delta_q(M,p)$ is called the {\em D'Angelo q-type} of $M$ at $p$.

In the same setting, the \textit{Catlin $q$-type} is defined in \cite{Ca87} as
\begin{equation*}
D_q(M,p)=\sup_{V^{q}}\,\,\underset{S\in G^{n-q+1}_p}\genva\,\, \max_{k=1,\dots,P}\frac{v(r_p\circ \gamma_{S}^k)}{v(\gamma_S^k)}.  
\end{equation*}
Here the supremum is taken over all germs $(V^q,p)$ of $q$-dimensional complex varieties, and for a generic $S\in G^{n-q+1}_p$, we denote by $\gamma_{S}^1,\dots,\gamma_{S}^P$ the germs at $0$ of the 
$1$-dimensional irreducible components of $V^q\cap S$ (see the beginning of Section \ref{Section 3}). 

In \cite{Da93}, D'Angelo made precise the relationship between the $1$-type $\Delta_1$ of the germ of a smooth hypersurface in $\mathbb{C}^n$ and the corresponding notion of $1$-type ${\bf T}_1$ 
for holomorphic ideals in $\mathcal{O}_n$. 
We consider here a situation in which this relationship is particularly simple. Let $M$ be defined locally at $p$ by an equation $r=0$, with
\begin{equation*}
r(z,\bar{z})=\Real (h)+\sum_{j=1}^t|f_j|^2
\end{equation*} 
for some holomorphic functions $h,f_1,\dots,f_t$ with $h(p)=f_j(p)=0$ and $dh(p)\neq 0$. Consider the associated ideal $I(M,p)=(h,f_1,\dots,f_t)$ in $\mathcal{O}_n$. We have (\cite[Theorem 4.7]{Da93})
\begin{equation}\label{pscvx}
\Delta_1(M,p)=2{\bf T}_1(I(M,p)).
\end{equation}

This discussion allows us to translate Propositions \ref{ex1} and \ref{ex} to the hypersurface setting. 
\begin{proposition}
There exist real hypersurfaces $M$ in $\mathbb{C}^4$ for which $$\Delta_2(M,0)< \,\underset{S\in G^{3}_0}\genva \,\,\Delta_1 (M\cap S,0).$$
\end{proposition}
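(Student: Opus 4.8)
The plan is to transfer the purely holomorphic counterexample from Proposition \ref{ex1} to the hypersurface setting via the dictionary \eqref{pscvx}. First I would take the ideal $I=(z_1^3-z_3z_2,z_2^2)\subset\mathcal{O}_3$ from the proof of Proposition \ref{ex1} and realize it as the associated ideal of an explicit hypersurface: set $h=z_1^3-z_3z_2$ (which has $dh(0)\ne 0$ because of the $z_3z_2$ term... actually $d(z_1^3-z_3z_2)(0)=0$, so instead I would adjoin a new variable $z_4$ and use $h=z_4$, keeping $f_1=z_1^3-z_3z_2$, $f_2=z_2^2$) and work in $\mathbb{C}^4$ with coordinates $(z_1,z_2,z_3,z_4)$. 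Thus $M\subset\mathbb{C}^4$ is defined near $0$ by
\begin{equation*}
r(z,\bar z)=\Real(z_4)+|z_1^3-z_3z_2|^2+|z_2^2|^2=0,
\end{equation*}
and the associated ideal is $I(M,0)=(z_4,z_1^3-z_3z_2,z_2^2)\subset\mathcal{O}_4$.

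Next I would identify the two sides of the claimed inequality with the corresponding holomorphic quantities. Eliminating the variable $z_4$ (it appears linearly and is forced to vanish to compute the $1$-type along curves, matching the role of $z_{m+1},\dots,z_n$ in Remark \ref{ubi}), one has $\Delta_1(M,0)=2{\bf T}_1(I(M,0))$ and, more to the point, for a generic affine $2$-plane $S\in G^3_0$ through $0$ the slice $M\cap S$ is a hypersurface in $\mathbb{C}^3$ whose associated ideal is $(I(M,0),w)$ for a generic linear $w$. Applying \eqref{pscvx} slice by slice gives $\Delta_1(M\cap S,0)=2{\bf T}_1(I(M,0),w)=2{\bf T}_1(I,w)$ once the $z_4$ is absorbed, where $I=(z_1^3-z_3z_2,z_2^2)$ is the original ideal in $\mathcal{O}_3$. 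Taking the infimum over $S$ yields $\Delta_2(M,0)=2{\bf T}_2(I)$, and taking the generic value over $S$ yields $\genv_{S\in G^3_0}\Delta_1(M\cap S,0)=2\beta_2(I)$. By Proposition \ref{ex1} we have ${\bf T}_2(I)=3<4=\beta_2(I)$, hence
\begin{equation*}
\Delta_2(M,0)=6<8=\underset{S\in G^3_0}\genv\,\Delta_1(M\cap S,0),
\end{equation*}
which is the desired strict inequality.

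The main obstacle is bookkeeping the passage from linear subspaces through $0$ in $\mathbb{C}^3$ (as used in the definitions of ${\bf T}_2$ and $\beta_2$) to $(n-q+1)$-dimensional affine subspaces through $p$ in the hypersurface $G^{n-q+1}_p$, together with checking that taking a generic slice commutes with the genericity already present in the definition of $\beta_2$ — i.e. that the generic value of $\Delta_1(M\cap S,0)$ over $S\in G^3_0$ really is computed by the generic linear slice of the holomorphic ideal and equals $2\beta_2(I)$, not merely $\le$ or $\ge$. This requires invoking Remark \ref{genericity} (the generic value is attained on a Zariski-open, hence dense, set of subspaces) and the fact that \eqref{pscvx} is an equality valid for each individual slice, so that it may be applied inside a generic-value computation. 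Once one is comfortable that these two independent "generic over $G$'' operations refer to the same Zariski-open subsets, the proof is just the substitution above; this is exactly why the paper says the proof is omitted as routine. I would also remark that the same construction applied to the ideals of Proposition \ref{ex} or Theorem \ref{theo} gives hypersurfaces $M$ in $\mathbb{C}^4$ with $\Delta_2(M,0)<D_2(M,0)$.
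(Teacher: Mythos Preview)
Your proposal is correct and follows essentially the same approach as the paper: the same hypersurface $M=\{\Real(z_4)+|z_1^3-z_3z_2|^2+|z_2|^4=0\}$ in $\mathbb{C}^4$, the same associated ideal, and the same reduction via \eqref{pscvx} to Proposition~\ref{ex1} to obtain $\Delta_2(M,0)=6<8=\genv_{S}\Delta_1(M\cap S,0)$. Your discussion of the bookkeeping (matching affine $3$-planes in $\mathbb{C}^4$ with linear slices of the ideal in $\mathcal{O}_3$ after absorbing $z_4$) is in fact more detailed than the paper's, which simply invokes \eqref{pscvx} and Proposition~\ref{ex1} without further comment; note, however, that the paper does give a short proof here rather than omitting it.
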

\begin{proof}
Consider the real hypersurface $M$ in $\mathbb{C}^4$ defined by the equation $r(z,\bar{z})=0,$ where $r(z,\bar{z})=\Real (z_4)+|z_1^3-z_3z_2|^2+|z_2|^4$. 
Note that the associated ideal is $I=(z_4,z_1^3-z_3z_2,z_2^2)\subset\mathcal{O}_4$. Combining \eqref{pscvx} with Propositions \ref{ex1} and \ref{ex} we obtain 
$$\Delta_2(M,0)=6<8= \,\underset{S\in G^{3}_0}\genva \,\,\Delta_1 (M\cap S,0).$$%\leq D_2(M,0).$$
\end{proof}

\begin{proposition}
There exist real hypersurfaces $M$ in $\mathbb{C}^4$ for which $$\Delta_2(M,0)<  D_2(M,0).$$
\end{proposition}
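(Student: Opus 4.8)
The plan is to mimic exactly the construction used in the preceding proposition, only replacing the defining data so that the relevant ideal falls under the hypothesis of Theorem \ref{theo}. Concretely, I would take the real hypersurface $M$ in $\mathbb{C}^4$ defined by $r(z,\bar z)=\Real(z_4)+|z_1^3-z_3z_2|^2+|z_2|^2=0$, whose associated ideal in the sense of the discussion above is $I(M,0)=(z_4,\,z_1^3-z_3z_2,\,z_2)$. Here $f(z_1)=z_1^3$ has $v(f)=3\geq 3$ and $g(z_2)=z_2$ has $v(g)=1$; unfortunately $v(g)\geq 2$ fails, so to stay strictly inside the hypothesis of Theorem \ref{theo} I would instead use $r(z,\bar z)=\Real(z_4)+|z_1^3-z_3z_2|^2+|z_2|^4$, giving $I(M,0)=(z_4,\,z_1^3-z_3z_2,\,z_2^2)$, i.e. the ideal $(z_1^3-z_3z_2,z_2^2)$ from Propositions \ref{ex1} and \ref{ex} with the extra generator $z_4$ adjoined.

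The key steps are then: (i) observe that $M$ has the required form $\Real(h)+\sum|f_j|^2$ with $h=z_4$, $dh(0)\neq 0$, so that \eqref{pscvx} applies and $\Delta_1(M,0)=2{\bf T}_1(I(M,0))$; (ii) use the adjoining-of-variables principle (Remark \ref{ubi}, and the analogous behavior for $D_q$ and for the hypersurface invariants under intersection with the affine slices $G^{n-q+1}_0$) to reduce the $2$-type computations for the ideal $(z_4,z_1^3-z_3z_2,z_2^2)\subset\mathcal{O}_4$ to the $2$-type computations for $(z_1^3-z_3z_2,z_2^2)\subset\mathcal{O}_3$ already carried out in Proposition \ref{ex}; (iii) conclude ${\bf T}_2(I(M,0))=3$ and $D_2(I(M,0))\geq 4$, hence $\Delta_2(M,0)=6<8\leq D_2(M,0)$ — the last inequality passing from the ideal-theoretic $D_2$ to the hypersurface $D_2(M,0)$ via the same $|h|$-doubling as in \eqref{pscvx}, applied to the generic-value formulation defining $D_q$.

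I expect the main obstacle to be the bookkeeping in step (ii): one must justify carefully that adjoining the transverse variable $z_4$ (equivalently, that the contribution of $\Real(z_4)$ as the single non-degenerate summand) changes both ${\bf T}_2$, $\beta_2$, and $D_2$ by the predictable amount and does not interact with the choice of slices $S\in G^{3}_0$ used to compute the $2$-type. For $\Delta_2(M,0)$ this is the content of slicing by affine subspaces and applying \eqref{pscvx} on each slice, which for the special polarized form here reduces to the holomorphic statement; but making the translation between the hypersurface $D_q(M,p)$ and the ideal $D_q(I)$ clean — in particular that $D_q(M,0)=2\,D_q(I(M,0))$ under the hypothesis $dh(0)\neq 0$ — requires invoking the analogue of \cite[Theorem 4.7]{Da93} at the level of the generic value, or else reproving the relevant inequality $D_2(M,0)\geq 2D_2(I(M,0))$ directly by exhibiting, for the variety $V=\{z_1^3-z_3z_2=0\}\subset\mathbb{C}^4$ (now viewed with the $z_4$ direction adjoined) and a generic affine slice, a component $\gamma$ on which $\frac{v(r\circ\gamma)}{v(\gamma)}\geq 8$, which follows from $v(z_2^2\circ\gamma)=4$ exactly as in Proposition \ref{ex}. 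The remaining estimates — the upper bound ${\bf T}_2\le {\bf T}_1(I,z_3)$ and the curve test $\gamma_{a,b}$ — are routine and identical to those already appearing in the proof of Proposition \ref{ex}.
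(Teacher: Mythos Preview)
Your proposal is correct and lands on exactly the same hypersurface as the paper; in fact the paper takes precisely the direct route you list as your alternative, exhibiting the variety $V=\{z_1^3-z_3z_2=0\}$ inside the hyperplane $L=\{z_4=0\}$ and observing that for the curves $\gamma_S$ from Proposition~\ref{ex} one has $r\circ\gamma_S=|z_2^2\circ\gamma_S|^2$ (the other two summands vanish identically on $\gamma_S$), so $v(r\circ\gamma_S)/v(\gamma_S)\geq 8$ directly, while $\Delta_2(M,0)=6$ is quoted from the preceding proposition. This sidesteps entirely the ideal-to-hypersurface translation for $D_q$ that you correctly flag as the main bookkeeping obstacle in your primary route.
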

\begin{proof}
Consider the real hypersurface $M$ in $\mathbb{C}^4$ defined by the equation $r(z,\bar{z})=0,$ where $r(z,\bar{z})=\Real (z_4)+|z_1^3-z_3z_2|^2+|z_2|^4$. Let $L$ be the hyperplane in $\mathbb{C}^4$ defined by 
the equation 
$z_4=0$. By Proposition \ref{ex} there exists a $2$-dimensional variety $V\subset L$ such that, for a generic $2$-dimensional plane $S$ in $L$, the intersection $S\cap V$ has a $1$-dimensional component whose germ 
$\gamma_S$ at $0$ satisfies
$$\frac{v(r\circ\gamma_S)}{v(\gamma_S)}\geq 8.$$
Therefore $D_2(M,0)\geq 8>6=\Delta_2(M,0).$
\end{proof}

\section{Inequalities between the two notions of type}
In our treatement so far we have presented several examples where the D'Angelo $q$-type and the Catlin $q$-type differ. We have also seen that this phenomenon involves the distinction between an infimum and a generic value. 
It is therefore natural to ask if there is a perfect correspondence between the two numbers when the D'Angelo $q$-type is replaced by an invariant defined in terms of the generic value. 
The answer was recently given by Brinzanescu and Nicoara in \cite{BN17}. After having been informed of Proposition \ref{ex1} of this paper, they have subsequently corrected their error and proved the following theorem, as well as the analogous result for the hypersurface case.

\begin{theorem}\cite[Theorem 1.5 (i)]{BN17}\label{nico}
 Let $I$ be an ideal in $\mathcal{O}_n$. For $1\leq q\leq n$, we have $D_q(I)=\beta_q(I)$.
\end{theorem}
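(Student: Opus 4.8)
The plan is to prove the two inequalities $D_q(I)\le\beta_q(I)$ and $\beta_q(I)\le D_q(I)$ separately, exploiting the fact that both quantities are defined in terms of generic values over a Grassmannian. The key structural observation is that for a generic slicing subspace $S\in G^{n-q+1}$, slicing a $q$-dimensional variety $V^q$ by $S$ produces a finite collection of $1$-dimensional curve germs $\gamma_S^k$, and the ``max over $k$'' appearing in the definition of $D_q$ is essentially computing a $1$-type along $V^q\cap S$ — which is a curve germ in $\mathbb{C}^{n-q+1}$ — while $\beta_q$ cuts down $I$ by $q-1$ generic linear functions, which amounts to the same thing: intersecting with the hyperplane $S$ defined by $w_1=\dots=w_{q-1}=0$.

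For $\beta_q(I)\le D_q(I)$: I would unwind the definition of $\beta_q$ as $\genv_{\{w_1,\dots,w_{q-1}\}}{\bf T}_1(I,w_1,\dots,w_{q-1})$, and for generic $S$ (defined by the $w_j=0$), rewrite ${\bf T}_1$ of the sliced ideal as a supremum over curves $z$ lying in $S$ of $\inf_{g\in I}v(g\circ z)/v(z)$. Any such curve generates (is contained in) some $q$-dimensional variety $V^q$ — for instance take $V^q$ to be an appropriate $q$-dimensional variety through the image of $z$, chosen so that $z$ is (a reparameterization of) one of the components $\gamma_S^k$ of $V^q\cap S$. Then the corresponding term in the definition of $D_q(I)$, namely $\inf_{g\in I}\genv_S\max_k v(g\circ\gamma_S^k)/v(\gamma_S^k)$, is bounded below by the contribution of that one curve, giving $D_q(I)\ge$ the value along $z$; taking the sup over $z$ in $S$ and then the generic value over $S$ yields the inequality. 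The point here is that the sup over varieties $V^q$ in $D_q$ is ``larger'' than the sup over curves implicit in $\beta_q$, while the max over branches only helps.

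For $D_q(I)\le\beta_q(I)$: given any $q$-dimensional variety $V^q$ and a generic $S$, each branch $\gamma_S^k$ of $V^q\cap S$ is a curve germ in $\mathbb{C}^n$ whose image lies in $S$; hence for each $k$, $\inf_{g\in I}v(g\circ\gamma_S^k)/v(\gamma_S^k)\le\sup_{z\in S}\inf_{g\in I}v(g\circ z)/v(z)={\bf T}_1(I,w_1,\dots,w_{q-1})$, where $w_j=0$ defines $S$. Taking the max over $k$ keeps us below this same bound, and then taking the generic value over $S$ (which exists by Catlin's Proposition 3.1, cited at the start of Section \ref{Section 3}) and the sup over $V^q$ gives $D_q(I)\le\beta_q(I)$. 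The subtlety to watch is the interchange of the $\inf_{g\in I}$ with the generic value over $S$ and with the $\max_k$: one must check that the ``generic $S$'' on which Catlin's quantity stabilizes can be taken inside the generic $S$ on which Remark \ref{genericity} gives the stable value $\beta$ of ${\bf T}_1(I,w_1,\dots,w_{q-1})$ — both are non-empty Zariski-open, hence their intersection is non-empty (indeed dense).

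The main obstacle I expect is the careful bookkeeping in the $\beta_q\le D_q$ direction: realizing an arbitrary curve $z$ lying in a generic hyperplane $S$ as a genuine irreducible $1$-dimensional component of $V^q\cap S$ for a suitable $q$-dimensional variety $V^q$, with matching multiplicity $v(\gamma_S^k)=v(z)$, and doing so in a way compatible with the genericity of $S$. One natural device is to take $V^q$ to be (the germ of) the union of translates of the image of $z$ sweeping out a $q$-dimensional set, or more cleanly to build $V^q$ from $z$ together with $n-q$ coordinate directions transverse to $S$; then $V^q\cap S$ recovers $z$ up to reparameterization. Once this realization is in place, both inequalities follow formally from the definitions.
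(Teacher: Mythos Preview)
The paper does not itself prove Theorem~\ref{nico}: it is quoted from \cite{BN17}. The paper only remarks that the inequality $D_q(I)\le\beta_q(I)$ is ``not hard'' (and already implicit in \cite{BN15}), while the reverse inequality $\beta_q(I)\le D_q(I)$ is ``more subtle,'' with the author deferring an alternative argument to future work. So there is no proof in the paper to compare against; what one can compare is your outline against the paper's assessment of where the difficulty lies.

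Your sketch for $D_q(I)\le\beta_q(I)$ is essentially the easy direction the paper alludes to. One small point: you bound $\max_k\inf_{g\in I}$ by $\mathbf{T}_1(I,w_1,\dots,w_{q-1})$, but $D_q$ involves $\inf_{g\in I}\max_k$, which is a priori larger. This is repaired by taking, for each fixed $S$, a generic linear combination $g$ of elements witnessing the individual infima, so that $v(g\circ\gamma_S^k)=\min_j v(h_j\circ\gamma_S^k)$ simultaneously for all finitely many $k$; this is presumably what \cite{BN15} does.

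The genuine gap is in your $\beta_q(I)\le D_q(I)$ argument. Building $V^q$ as a cylinder over a curve $z\subset S_0$ does \emph{not} give the needed lower bound, precisely because the Catlin generic slice $S'$ of that cylinder may yield a strictly smaller value than the slice at $S_0$. Concretely, run your construction on the paper's own example $I=(z_1^3-z_3z_2,\,z_2^2)$: the curve $\gamma_{a,b}(t)=(t,-t^2/a+\cdots,-at+(b/a)t^2+\cdots)$ in the plane $S_{a,b}=\{z_3+az_1+bz_2=0\}$ achieves $\beta_2(I)=4$. Cylindering it in the $z_3$-direction and slicing by a generic $S_{a',b'}$ with $a'\ne a$ gives a curve on which $z_1^3-z_3z_2$ vanishes only to order $3$ (the leading term is $(1-a'/a)t^3$), so the Catlin contribution of this $V^2$ is $3$, not $4$. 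By contrast, the variety $V^2=\{z_1^3-z_3z_2=0\}$ used in Proposition~\ref{ex} and Theorem~\ref{theo} \emph{does} give $4$, because one generator of $I$ vanishes identically on it and only $z_2^2$ survives. The moral is that the $V^q$ witnessing $D_q$ must be adapted to the ideal $I$ (e.g.\ cut out by suitable generators of $I$), not manufactured from a single curve; this is exactly the subtlety the paper flags, and your cylinder device does not address it.
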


The inequality $D_q(I)\leq \beta_q(I)$ is not hard. A proof of this fact is already present in the proof of Theorem 1.1 in \cite{BN15}, even though the authors state their result in terms of ${\bf T}_q(I)$, erroneously assuming that ${\bf T}_q(I)=\beta_q(I)$ in general. The reverse inequality $\beta_q(I)\leq D_q(I)$ is more subtle. The author hopes to give in a future paper a different proof of this inequality.

In Section \ref{Section 3} we have seen, for $q\geq 2$, that the difference $D_q(I)-{\bf T}_q(I)$ can be arbitrarily large. We now show how Theorem \ref{nico} implies
 some uniform inequalities between ${\bf T}_q$ and $D_q$. In particular, $D_q$ is bounded above by a power of ${\bf T}_q$ which depends only on $q$ and the dimension $n$ of the ambient space. (A similar statement appears in Proposition 1.4 of \cite{BN17}). 

We first recall an auxiliary result.
For an ideal $I$ in $\mathcal{O}_n$, let $${\bf mult}(I)=\dim_{\mathbb{C}}\mathcal{O}_n/I$$ be the \textit{codimension of $I$}. The following inequality is due to D'Angelo (\cite[Theorem 2.4]{Da93}).

\medskip

\textit{If $I$ contains $q$ independent linear functions, then}
\begin{equation}\label{inequalitymult}
{\bf T}_1(I)\leq {\bf mult}(I)\leq \left({\bf T}_1(I)\right)^{n-q}.
\end{equation}

\begin{proposition}
Let $I$ be an ideal in $\mathcal{O}_n$. Then, for $1\leq q\leq n$,
\begin{equation*}
{\bf T}_q(I)\leq D_q(I)\leq \left({\bf T}_q(I)\right)^{n-q+1}.
\end{equation*}
\end{proposition}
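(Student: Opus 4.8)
The plan is to deduce both inequalities from the results already assembled. The lower bound $\mathbf{T}_q(I)\le D_q(I)$ should follow directly: by Theorem \ref{nico} we have $D_q(I)=\beta_q(I)$, and $\beta_q(I)\ge \mathbf{T}_q(I)$ because an infimum over all choices of $\{w_1,\dots,w_{q-1}\}$ is at most the value attained on the generic (Zariski-open, hence dense) set from Remark \ref{genericity}. So the content is entirely in the upper bound.

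For the upper bound, the strategy is to combine Theorem \ref{nico} with D'Angelo's codimension inequality \eqref{inequalitymult}, applied slice by slice. First I would fix a generic subspace $S$ of dimension $n-q+1$, defined by $w_1=\dots=w_{q-1}=0$, so that simultaneously $\mathbf{T}_1(I,w_1,\dots,w_{q-1})=\beta_q(I)$ (Remark \ref{genericity}) and $D_q(I)=\beta_q(I)$ (Theorem \ref{nico}); the Zariski-open conditions defining these generic loci intersect, since each is dense. The ideal $J=(I,w_1,\dots,w_{q-1})$ contains the $q-1$ independent linear functions $w_1,\dots,w_{q-1}$. If $\mathbf{T}_1(J)=\infty$ there is nothing to prove (both sides are infinite), so assume it is finite; then \eqref{inequalitymult}, with ``$q$'' there taken to be $q-1$, gives $\mathbf{T}_1(J)\le \mathbf{mult}(J)\le \left(\mathbf{T}_1(J)\right)^{n-(q-1)}=\left(\mathbf{T}_1(J)\right)^{n-q+1}$. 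Hence
\begin{equation*}
D_q(I)=\beta_q(I)=\mathbf{T}_1(J)\le \left(\mathbf{T}_1(J)\right)^{n-q+1}=\left(\beta_q(I)\right)^{n-q+1}.
\end{equation*}
This is not yet the claim, since the right-hand side has $\beta_q$ rather than $\mathbf{T}_q$; to finish I would instead apply \eqref{inequalitymult} in the other direction, bounding $\mathbf{T}_1(J)$ by a power of something controlled by $\mathbf{T}_q(I)$. The right move is: for \emph{every} admissible choice $\{w_1,\dots,w_{q-1}\}$ with $\mathbf{T}_1(I,w_1,\dots,w_{q-1})$ finite, \eqref{inequalitymult} gives $\mathbf{mult}(I,w_1,\dots,w_{q-1})\le \left(\mathbf{T}_1(I,w_1,\dots,w_{q-1})\right)^{n-q+1}$, while the lower bound $\mathbf{T}_1(I,w_1,\dots,w_{q-1})\le \mathbf{mult}(I,w_1,\dots,w_{q-1})$ holds too. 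Taking the infimum over all such tuples and using that $x\mapsto x^{n-q+1}$ is monotone, one gets $\beta_q(I)\le \inf_{\{w\}}\mathbf{mult}(I,w_1,\dots,w_{q-1})$; but the infimum of the codimension over generic slices equals the codimension on a generic slice, which is $\le \left(\mathbf{T}_q(I)\right)^{n-q+1}$ once one checks that the infimum defining $\mathbf{T}_q$ is attained on a slice where the multiplicity bound also applies. Combining with $D_q(I)=\beta_q(I)$ yields $D_q(I)\le \left(\mathbf{T}_q(I)\right)^{n-q+1}$.

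The main obstacle I anticipate is the bookkeeping about \emph{which} slices are generic: I need one subspace $S$ that is simultaneously generic for Remark \ref{genericity} (so $\mathbf{T}_1$ on the slice equals $\beta_q$), generic for Theorem \ref{nico}/Section \ref{Section 3} (so the Catlin computation is the generic one), and such that $\mathbf{mult}(I,w_1,\dots,w_{q-1})$ is the generic (minimal) value — then \eqref{inequalitymult} can be invoked on that single $J$. Since each genericity requirement excludes a proper Zariski-closed subset of the relevant Grassmannian, their common complement is nonempty, and on it the chain $\beta_q(I)=\mathbf{T}_1(J)\le \mathbf{mult}(J)\le \left(\mathbf{T}_1(J)\right)^{n-q+1}$ together with $\mathbf{T}_q(I)\le \mathbf{T}_1(J)$ is not quite what I want; the cleaner route, which I would adopt in the writeup, is to note $\mathbf{mult}$ is upper semicontinuous in the linear parameters so its generic value is its infimum, hence $\mathbf{mult}(J)\le \mathbf{mult}(I,w_1,\dots,w_{q-1})$ for the \emph{optimal} tuple realizing (or approaching) $\mathbf{T}_q(I)$, and then $\mathbf{mult}(I,w_1,\dots,w_{q-1})\le \left(\mathbf{T}_1(I,w_1,\dots,w_{q-1})\right)^{n-q+1}\to \left(\mathbf{T}_q(I)\right)^{n-q+1}$. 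Handling the infinite cases and the passage from "infimum approached" to "bound attained" (using that types and multiplicities take values in a discrete set, so the infimum in \eqref{deltaq} is a minimum whenever finite) is the only place requiring care.
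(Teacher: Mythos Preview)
Your proposal is correct and, once you settle on the ``cleaner route'' in the final paragraph, it is exactly the paper's argument: bound $\beta_q(I)=\mathbf{T}_1(J)\le\mathbf{mult}(J)$ at a generic slice via \eqref{inequalitymult}, use upper semicontinuity of $\mathbf{mult}$ to pass to the slice $\tilde w$ realizing $\mathbf{T}_q(I)$, and apply \eqref{inequalitymult} again to get $\mathbf{mult}(I,\tilde w)\le\bigl(\mathbf{T}_q(I)\bigr)^{n-q+1}$. Your care about the infimum in \eqref{deltaq} being attained is warranted; the paper simply assumes it.
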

\begin{proof}
The first inequality follows immediately from Theorem \ref{nico} and the obvious fact ${\bf T}_q(I)\leq\beta_q(I)$. 
Let now $\tilde{w}_1,\dots,\tilde{w}_{q-1}$ be linear functions such that ${\bf T}_q(I)={\bf T}_1(I,\tilde{w}_1,\dots,\tilde{w}_{q-1})$. Then 
\begin{equation}\label{lunga}
\begin{split}
\beta_q(I)&=\underset{\{w_1,\dots,w_{q-1}\}}\genv{\bf T}_1(I,w_1,\dots,w_{q-1})\\ &\leq \underset{\{w_1,\dots,w_{q-1}\}}\genv{\bf mult}(I,w_1,\dots,w_{q-1}) \\ 
& \leq {\bf mult}(I,\tilde{w}_1,\dots,\tilde{w}_{q-1})\\ &\leq \left({\bf T}_1(I,\tilde{w}_1,\dots,\tilde{w}_{q-1})\right)^{n-q+1}=\left({\bf T}_q(I)\right)^{n-q+1},
\end{split}
\end{equation}
The first and third inequalities in \eqref{lunga} are an application of \eqref{inequalitymult}, and the second inequality is a consequence of the upper semicontinuity of ${\bf mult}$ (\cite[Theorem 2.1]{Da93}). Here we 
think of $(I,w_1,\dots,w_{q-1})$ as an ideal depending on parameters, namely the coefficients of the 
linear functions $w_1,\dots,w_{q-1}$. By Theorem \ref{nico}, we now conclude that $$D_q(I)=\beta_q(I)\leq \left({\bf T}_q(I)\right)^{n-q+1}.$$
\end{proof}

\section{Acknowledgements}
This research was partially supported by NSF Grant DMS 13-61001 of John D'Angelo. The author would like to thank Professor D'Angelo for his support and guidance. 
The author also wishes to thank the anonymous referees for helpful suggestions. During the revision process of this paper, the author received from Nicoara the preprint \cite{BN17}, which acknowledges this work and corrects \cite{BN15}.


\begin{thebibliography}{WWWW}


\bibitem{BN15} V. Brinzanescu and A. Nicoara, On the relationship between D'Angelo $q$-type and Catlin $q$-type, {\em J. Geom. Anal.} {\bf 25} (3) (2015), 1701--1719.
\bibitem{BN17} V. Brinzanescu and A. Nicoara, Relating Catlin and D'Angelo $q$-types, \href{https://arxiv.org/abs/1707.08294}{arXiv:1707.08294[math.DS]}. 
\bibitem{Ca87} D.\ Catlin,  Subelliptic estimates for the $\bar\partial$-Neumann problem on pseudoconvex domains, {\em  Ann. Math.} {\bf 126} (1) (1987), 131--191.
\bibitem{Da80} J.P. D'Angelo, Subelliptic estimates and failure of semicontinuity for orders of contact, {\em Duke Math. J.} {\bf 47} (1980), 955--957.
\bibitem{Da82} J.P. D'Angelo, Real hypersurfaces, orders of contact, and applications, {\em Ann. Math.} {\bf 115} (3) (1982), 615--637.
\bibitem{Da93} J.P. D'Angelo, Several complex variables and the geometry of real hypersurfaces. CRC press, Boca Raton, 1993.
\bibitem{DaKo99}J.P. D'Angelo and J.J. Kohn, Subelliptic estimates and finite type, in {\em Several Complex Variables,} Math. Sci. Res. Inst. Publ., vol. \textbf{37}, Cambridge Univ. Press, Cambridge, 1999, pp. 199–-232.

\end{thebibliography}
\end{document}